\newcommand{\R}{{\mathbb R}}
\newcommand{\D}{{\mathbb D}}
\newcommand{\N}{{\mathbb N}}
\renewcommand{\k}{{\bf k}}
\newcommand{\e}{{\bf e}_1}
\newcommand{\ed}{{\bf e}_2}
\newcommand{\pd}[2]{\frac{\partial #1}{\partial #2}}
\newcommand{\wt}{\widetilde}
\newcommand{\spk}{\operatorname{Sup}}
\newcommand{\diam}{\operatorname{diam}}
\newcommand{\calI}{\mathcal{I}}
\newcommand{\calP}{\mathcal{P}}
\newcommand{\calV}{\mathcal{V}}
\newcommand{\frakD}{\mathfrak{D}}
\newcommand{\frakI}{\mathfrak{I}}
\newcommand{\frakP}{\mathfrak{P}}
\theoremstyle{plain}
\newtheorem{theorem}{Theorem}[section]
\newtheorem{corollary}[theorem]{Corollary}
\newtheorem{lemma}[theorem]{Lemma}
\theoremstyle{definition}
\newtheorem{definition}[theorem]{Definition}
\newtheorem{example}[theorem]{Example}
\theoremstyle{remark}
\newtheorem{remark}[theorem]{Remark}
\definecolor{org1}{RGB}{67, 132, 252}
\begin{document}
\title{Hyperbolic Functions of Bounded Variation and Riemann-Stieltjes Integral involving Strong Partitions of Hyperbolic Intervals}
\author{Gamaliel Yafte Tellez-S\'anchez$^{(\dagger)}$, Juan Bory-Reyes$^{(\star)}$}
\date{\small $^{(\dagger)}$ Escuela Superior de F\'isica y Matem\'aticas. Instituto Polit\'ecnico Nacional. Edif. 9, 1er piso, U.P. Adolfo L\'opez Mateos. 07338, Mexico City, M\'exico.\\E-mail: gtellez.wolf@gmail.com\\
		$^{(\star)}$ Escuela Superior de Ingenier\'ia Mec\'anica y El\'ectrica. Instituto Polit\'ecnico Nacional. Edif. 5, 3er piso, U.P. Adolfo L\'opez Mateos. 07338, Mexico City, MEXICO\\ Email: juanboryreyes@yahoo.com}

\maketitle

\begin{abstract}
In this paper, we define two types of partitions of an hyperbolic interval: weak and strong. Strong partitions enables us to define, in a natural way, a notion of hyperbolic valued functions of bounded variation and hyperbolic analogue of Riemann-Stieltjes integral. We prove a deep relation between both concepts like it occurs in the context of real analysis.

\bigskip
\noindent
{\bf Math Subject Classification (2020):}: 30G35, 28B15, 26B15.

\noindent{\bf Keywords}: Hyperbolic numbers, partitions, bounded variation functions, Riemann-Stieltjes integral.
\end{abstract}

\section{Introduction}

Literature on hyperbolic numbers like, for instance, \cite{vd1935, tb2021} highlights that these numbers themselves are not so complicated and have hybrid behavior between real numbers and complex numbers. The hyperbolic numbers born as a real sub-algebra of Tessarine numbers introduced by J. Cockle in \cite{c1848} who focused the study in an hypercomplex analysis context with a Cauchy-Riemann type system. However, there exists an isomorphism between the hyperbolic numbers plane and the Cartesian product of the real numbers set with itself that changes the hypercomplex direction to more similar real analysis framework.

With this aim, a partial order was introduced in hyperbolic numbers, see the classical reference \cite{s1995}, which provides conditions of holomorphicity and continuity for functions of hyperbolic variable early presented in \cite{vd1935}, see also \cite{kgs2020}. Partial order is used to prove that natural domains are rectangles in the hyperbolic plane.

In recent years, we have seen the great success of hyperbolic intervals partitioning. For example, in \cite{bbls2016, tb2017, t2018} the concept was applied to certain classification of Cantor type sets in the hyperbolic plane. An special example was considered in \cite{ tb2021, als2017}, where the focus is to consider sets of probabilities defined like a division of the unit hyperbolic interval $[0, \wt{1}]_{\k}$.

To understand how an hyperbolic interval can be divided in Section \ref{scHP}, two types of partitions are defined, where the prominent type: strong partition, allows us to concern in the later sections with the notion of natural hyperbolic functions.

Sections \ref{scHFBV} and \ref{scHRSI} are devoted to important applications of strong partitions. First, we introduce a notion of hyperbolic functions of bounded variation and indicate how the set of discontinuities may be defined like in real numbers occurs. Second, we proceed with the definition of Riemann-Stieltjes integral of a hyperbolic valued functions and its relation with the derived Riemann integral presented in \cite{vd1935, ks2017}.

As well as in real numbers happen, see \cite{a1974}, the hyperbolic bounded variation condition introduced here is sufficient for the existence of the hyperbolic Riemann-Stieltjes integral. This is the final conclusion of Section \ref{scHRSI}.

\section{Hyperbolic Numbers}
Hyperbolic numbers are a generalization of complex numbers, which are classical extension of the real numbers by the inclusion of an imaginary unit, whereas the hyperbolic numbers also do it by a new square root $\k \not\in \R$ such that $\k^2 = 1$ . These are introduced by J. Cockle in \cite{c1848} like a sub-algebra from the nowadays well know bicomplex numbers, see \cite{vd1935, s1995}. Whereas each nonzero complex number has a multiplicative inverse, this is no longer true for all nonzero hyperbolic numbers.

The real ring of hyperbolic numbers is the commutative ring denoted usually by
\[\D := \R[\k] = \{ a + b\k \ |\ a, b \in \R \}.\]
Because there exists a bijection between Euclidean plane and hyperbolic numbers, set $\D$ is also known as hyperbolic numbers plane. 

There are two very special zero divisors and mutually complementary elements in $\D$ which are
\[\e = \frac{1+\k}{2}, \quad \ed = \frac{1 - \k}{2}.\]

Given $\alpha \in \D$ we have
\[\alpha = t + s\k \Rightarrow \alpha = (t + s)\e + (t - s)\ed. \]
\[\alpha = a_1\e + a_2\ed \Rightarrow \frac{1}{2}(a_1 + a_2) + \frac{1}{2}(a_1 - a_2)\k. \]
Therefore, $\D = \R\e + \R\ed$ and a ring isomorphism maps $\D$ into the direct product $\R \oplus \R$. In this way we obtain what   will be referred to as idempotent representation.

Real line is endowed into the hyperbolic plane by the function $x \mapsto \wt{x} = x\e + x\ed$. On the other side the idempotent projections of a subset $A \subset \D$ are the real sets
\[A_{\e} := \{ a \in \R \ |\ \exists b\in \R,\ a\e + b\ed \in A \}, \]
\[A_{\ed} := \{ b \in \R \ |\ \exists a \in \R,\ a\e + b\ed \in A \}. \]

\subsection{Partial order}
Hyperbolic numbers are a partially ordered set with a binary relation given by 
\[\alpha \preceq \beta \quad \Leftrightarrow a_1 \leq b_1\ \land\ a_2 \leq b_2,\]
for $\alpha, \beta \in \D$, where $\alpha = a_1\e + a_2\ed$ and $\beta = b_1\e + b_2\ed$. 

The strict order is defined in a similar way, indicating the strict order in the real line
\[\alpha \prec \beta \quad \Leftrightarrow \quad a_1 < b_1 \ \land \ a_2 < b_2. \]

The strict order is not partial order with the constraint that $\alpha \not= \beta$, since it implies that one of the following two cases could occur: nor $a_1 = b_1$ and $a_2 \not= b_2$,  or $a_1 \not= b_1$ and $a_2 = b_2$. These cases have been deleted in the strict order.

Partial order give the possibility to define in a natural way different types of intervals. If $\alpha$ and $\beta$ are related number such that $\alpha \prec \beta$, then an hyperbolic closed interval is defined by
\begin{displaymath}
[\alpha, \beta]_{\k} := \{ \xi \in \D \ |\ \alpha \preceq \xi \preceq \beta \}.
\end{displaymath}
Likewise, an open hyperbolic interval setting up 
\begin{displaymath}
(\alpha, \beta)_{\k} := \{ \xi \in \D \ |\ \alpha \prec \xi \prec \beta \}.
\end{displaymath}
We will consider others type of intervals, for instance, $[\alpha, \beta)_{\k}$ and $(\alpha, \beta]_{\k}$ to be defined in much the same way.

By the length of an hyperbolic interval $\frakI = [\alpha, \beta]_{\k}$ (equally valid for all types of intervals) we mean
\begin{displaymath}
\lambda_{\k}(\frakI) = \beta - \alpha
\end{displaymath}

\subsection{Natural hyperbolic functions}
Let $\Omega \subset \D$ be a domain. A function $F: \Omega \rightarrow \D$ may also be viewed as functions of two real variables on the Euclidean plane. Into hyperbolic numbers plane there exists a well defined Cauchy-Riemann equations theory (see \cite{vd1935, mr1998, ks2005}). If $F = u + v\k$ is differentiable in $\xi_0 = t_0 + s_0\k$, it fulfill
\[ \frac{\partial u}{\partial t}(\xi_0) = \frac{\partial v}{\partial s}(\xi_0), \quad \frac{\partial u}{\partial s}(\xi_0) = \frac{\partial v}{\partial t}(\xi_0).\]
Moreover, if $F = F_1\e + F_2\ed$ satisfies the Cauchy-Riemann equations, the idempotent components of $F$ are functions of one real variable, see   \cite{vd1935, tb2019} for more details. This implies that the derivative of $F$, denoted by $F'$, is then computed by the partial derivatives on every component, which are taken as the total derivatives at the point $\xi_0 = x_0\e + y_0\ed$.
\[F'(\xi_0) = \pd{F_1}{x}(\xi_0)\e + \pd{F_2}{y}(\xi_0)\ed = \frac{dF_1}{dx}(x_0)\e + \frac{dF_2}{dy}(y_0)\ed.\]
It is easy to check that every domain $\Omega$ can be extended to the minimum open interval that contains it, for a fuller treatment \cite{vd1935}.
\[\overline{\Omega} := \Omega_{\e}\e + \Omega_{\ed}\ed.\]
Hence, $F$ will be defined on $\overline{\Omega}$ and if it is regarded in the idempotent representation $F = F_1\e + F_2\ed$, then $F_1$ and $F_2$ are real valued functions over $\Omega_{\e}$ and $\Omega_{\ed}$ respectively.

By this reason we realize that functions $F: \frakI \rightarrow \D$ are the natural subject of study in the hyperbolic plane, where $F = F_1\e + F_2\ed$, $F_1$, $F_2$ are real valued functions of one variable and $\frakI$ can be either an open or a close hyperbolic interval. Therefore, under previous features, we shall call $F$ a natural hyperbolic function.

\subsection{Continuous hyperbolic functions}\label{sc1}
With the partial order in the hyperbolic numbers, several concepts can be extended to objects defined in the hyperbolic plane. For example in \cite{kgs2020, ks2016} the concept of hyperbolic metric spaces was studied. 

The duple $(X, D)$ is an hyperbolic metric space when $X$ is a no-empty set and $D: X \times X \rightarrow D_{0}^{+}$ is a function with the next requirements for all $x, y, z \in X$,
\begin{enumerate}[1)- \quad]
\item $D(x, y) = 0 \Leftrightarrow x = y$.
\item $D(x, y) = D(y, x)$.
\item $D(x, y) \preceq D(x, z) + D(z, y)$.
\end{enumerate}

Hyperbolic numbers form an hyperbolic metric space with the usual hyperbolic metric $|\cdot |_{\k}: \D \times \D \rightarrow \D_{0}^{+}$  such that for every $\xi, \gamma \in \D$ and $\xi = x_1\e + x_2\ed$, $\gamma = y_1\e + y_2\ed$, we have
\[|\xi - \nu|_{\k} = |x_1 - y_1|\e + |x_2 - y_2|\ed.\]
This fact has previously been introduced in \cite{kgs2020, ks2016, lssv2015}.

Continuity of functions between two hyperbolic metric spaces is a concept already treated in the literature, see \cite{kgs2020}. A function $F:(X, D_X) \rightarrow (Y, D_Y)$ between hyperbolic metric spaces is say to be continuous, if for every hyperbolic positive number $\epsilon \in \D_{+}$ there exists a $\delta \in \D_{+}$ such that for every $\xi, \nu \in X$, with $D_X(\xi, \nu) \prec \delta$ we have that $D_Y(F(\xi), F(\nu)) \prec \epsilon$.

A natural hyperbolic function $F:[\alpha, \beta]_{\k}\rightarrow \D$ in idempotent representation, with usual hyperbolic metric on $\D$, is continuous if and only if every component $F_j: [a_j, b_j] \rightarrow \R, j \in \{1, 2\}$ is real continuous.

\section{Partitions Involving Hyperbolic Intervals}\label{scHP}
Let us start with a brief discussion of the possibility to recover all information about an hyperbolic interval after a division in sub-intervals. For instance, the length of the original hyperbolic interval. This can be found in \cite{bbls2016}.
\begin{example}\label{ex1}
Considering the interval $\frakI = [0, \wt{1}]_{\k}$ and dividing it by nine sub-intervals
\begin{displaymath}
\arraycolsep=1.4pt\def\arraystretch{2.2}
\begin{array}{ccc}
\frakI_1 = \left[0, \wt{\frac{1}{3}}\right]_{\k}, & \frakI_2 = \left[\wt{\frac{1}{3}}, \wt{\frac{2}{3}} \right]_{\k}, & \frakI_3 = \left[\wt{\frac{2}{3}}, \wt{1} \right]_{\k}, \\
\frakI_4 = \left[\frac{1}{3}\e, \frac{2}{3}\e + \frac{1}{3}\ed \right]_{\k}, & \frakI_5 = \left[\frac{2}{3}\e, 1\e + \frac{1}{3}\ed \right]_{\k}, & \frakI_6 = \left[\frac{2}{3}\e + \frac{1}{3}\ed, 1\e + \frac{2}{3}\ed \right]_{\k}, \\
\frakI_7 = \left[\frac{1}{3}\ed, \frac{1}{3}\e + \frac{2}{3}\ed \right]_{\k}, & \frakI_8 = \left[\frac{2}{3}\ed, \frac{1}{3}\e + 1\ed \right]_{\k}, & \frakI_9 = \left[\frac{1}{3}\e + \frac{2}{3}\ed, \frac{2}{3}\e + 1\ed \right]_{\k}.
\end{array}
\end{displaymath}
Therefore
\begin{displaymath}
\lambda_{\k}(\frakI) = \wt{1} \neq \wt{3} = \sum_{n = 1}^{9} \lambda_{\k}(\frakI_n)
\end{displaymath}
\end{example}

The Example \ref{ex1} shows a regular partition of an square in the Euclidean plane. It fulfill, in Lebesgue sense, that the sum of the areas of every sub-squares is equal to the total area of the big square.

By this reason, an interval $[\alpha, \beta]_{\k}$ may be considered as a square in the Euclidean plane. A partition in sub-rectangles $S_1, ..., S_k$ such that
\[\mu_{\R}([\alpha, \beta]_{\k}) = \sum_{j = 1}^{k} \mu_{\R}(S_j), \]
with $\mu_{\R}$ denotes the Lebesgue measure, will be called a regular partition.
\begin{remark}
Previous definition is not restricted to partitions of rectangles generated by real intervals. For every $j \in \{1,..., k\}$, $S_j$ could be a measurable set and if $j \neq t$, then $S_j \cap S_t$ has measure zero.
\end{remark}
In order to guarantee a positive answer we make the following natural definition of hyperbolic interval partition.
\begin{definition}
A collection $\calI$ of sub-intervals from $[\alpha, \beta]_{\k}$ is a weak partition when 
\[ \lambda_{\k}([\alpha, \beta]_{\k}) = \sum_{I \in \calI} \lambda_{\k} (I). \]
\end{definition} 

This type of partitions has the disadvantage of being able to be constituted by disjoint sub-intervals as Fig. \ref{fg:wp} shows.
\begin{figure}[ht]
\begin{subfigure}{.5\textwidth}
\centering
\includegraphics[scale=1]{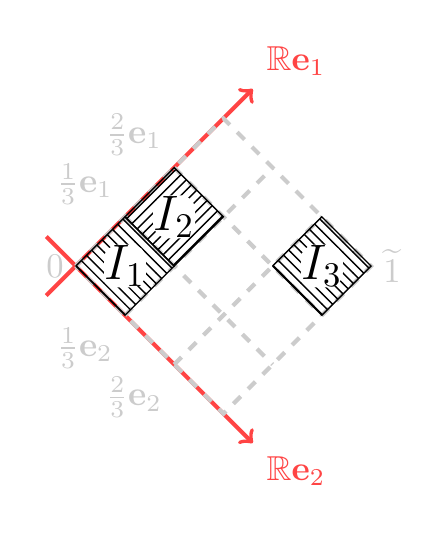}
\caption{ }
\label{fg:wp1}
\end{subfigure}
\begin{subfigure}{.5\textwidth}
\centering
\includegraphics[scale=1]{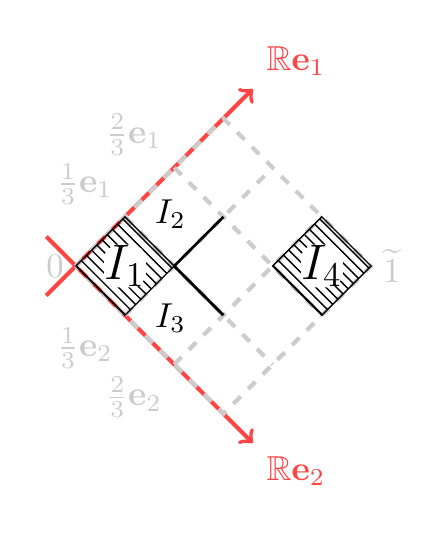}
\caption{ }
\label{fg:wp2}
\end{subfigure}
\caption{Example of weak partition}
\label{fg:wp}
\end{figure}

Figure \ref{fg:wp1} has three sub-intervals from $[0, \wt{1}]_{\k}$. All of them have length equal to $\wt{\frac{1}{3}}$. Therefore
\[\lambda_{\k}(I_1) + \lambda_{\k}(I_2) + \lambda_{\k}(I_3) = \wt{1}.\]

In the other hand Fig. \ref{fg:wp2} has four intervals where $I_1$ and $I_4$ have length equal to $\wt{\frac{1}{3}}$, but for the two remaining intervals we have
\[\lambda_{\k}(I_2) = \frac{1}{3}\e \text{ and } \lambda_{\k}(I_3) = \frac{1}{3}\ed. \]
So, the sum of lengths of the four intervals being equal to $\wt{1}$.
\paragraph{}
In \cite{bbls2016}, to avoid the disjoint intervals issue, a condition under which a collection of points into an hyperbolic interval provides a collection of sub-intervals whose lengths add up to the length of the biggest interval and do not have empty intersection was established there. 

\begin{definition} \label{strongpartition}
Let $\frakP = \{\rho_0, ..., \rho_n\}$ be a finite collection of points in the interval $[\alpha, \beta]_{\k}$ such that $\rho_s \neq \rho_t$ when $s \neq t$. We say that $\frakP$ is a strong partition, if both conditions are fulfill
\begin{enumerate}[1)- \quad]
\item $\frakP$ is a chain on $\D$.
\item\label{strongpartition2} $\rho_0 = \alpha$, $\rho_n = \beta$ and
\[ \rho_0 \preceq \rho_1 \preceq ... \preceq \rho_n .\]
\end{enumerate}
\end{definition}
There are two differences between Def. \ref{strongpartition} and that given in \cite{bbls2016}. The first is that equality in (\ref{strongpartition}-\ref{strongpartition2}) is avowed, meanwhile in \cite{bbls2016} an strict relation is required. As a consequence a third condition relative to the absence of zero divisor in the lengths among sub-intervals of the kind $[\rho_{j-1}, \rho_{j}]_{\k}$ is established. Inclusion of equality into the second requirement do not alter the proof of the next theorem.
\begin{theorem}
If $\frakP$ is a strong partition of $[\alpha, \beta]_{\k}$, then
\[ \sum_{j = 1}^{n} \lambda_{\k}([\rho_{j-1}, \rho_{j}]_{\k}) = \lambda_{\k} ([\alpha, \beta]_{\k}). \]
\end{theorem}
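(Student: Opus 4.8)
The plan is to reduce the identity to a telescoping sum in the additive group $(\D,+)$. First I would record that, since $\frakP$ is a chain with all points distinct and enumerated so that $\rho_0 \preceq \rho_1 \preceq \cdots \preceq \rho_n$, every consecutive pair satisfies $\rho_{j-1} \preceq \rho_j$; writing $\rho_j = r_j^{(1)}\e + r_j^{(2)}\ed$ in the idempotent representation, this means $r_{j-1}^{(i)} \le r_j^{(i)}$ for $i=1,2$ (with at least one strict inequality, so the sub-intervals are legitimate, possibly degenerate, hyperbolic intervals). Here one should note explicitly that allowing equality in Definition \ref{strongpartition} may turn an individual length $\lambda_{\k}([\rho_{j-1},\rho_j]_{\k})$ into a zero divisor of the form $c\e$ or $c\ed$, but this is harmless for the computation that follows — which is precisely the point of the remark preceding the statement.

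Next I would apply the definition of length term by term, $\lambda_{\k}([\rho_{j-1},\rho_j]_{\k}) = \rho_j - \rho_{j-1}$, and sum. Because addition in $\D$ is that of an abelian group, the sum collapses:
\[
\sum_{j=1}^{n} \lambda_{\k}\big([\rho_{j-1},\rho_j]_{\k}\big) \;=\; \sum_{j=1}^{n} (\rho_j - \rho_{j-1}) \;=\; \rho_n - \rho_0 \;=\; \beta - \alpha \;=\; \lambda_{\k}\big([\alpha,\beta]_{\k}\big),
\]
which is the assertion. Equivalently — and this may be the cleaner way to present it — one can pass to components at the outset, write $\lambda_{\k}([\rho_{j-1},\rho_j]_{\k}) = (r_j^{(1)}-r_{j-1}^{(1)})\e + (r_j^{(2)}-r_{j-1}^{(2)})\ed$, invoke the real telescoping identity $\sum_{j=1}^n (r_j^{(i)}-r_{j-1}^{(i)}) = r_n^{(i)}-r_0^{(i)}$ in each slot $i=1,2$, and recombine using $\alpha = r_0^{(1)}\e + r_0^{(2)}\ed$, $\beta = r_n^{(1)}\e + r_n^{(2)}\ed$.

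There is essentially no hard step; the only thing that needs care is the bookkeeping turning the chain/ordering hypothesis into the statement that every $[\rho_{j-1},\rho_j]_{\k}$ is a genuine hyperbolic interval with length $\rho_j-\rho_{j-1}$. In particular I would stress that the chain condition is used only to guarantee that consecutive points are $\preceq$-comparable; it is \emph{not} being claimed (and is false in general, cf. Example \ref{ex1}) that the sub-intervals $[\rho_{j-1},\rho_j]_{\k}$ cover $[\alpha,\beta]_{\k}$. The content of the theorem is exactly that the lengths telescope to $\lambda_{\k}([\alpha,\beta]_{\k})$ even though the union of the pieces is, in general, a proper subset of the whole interval.
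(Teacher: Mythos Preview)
Your telescoping argument is correct and is the natural proof: once each $[\rho_{j-1},\rho_j]_{\k}$ is recognized as a (possibly degenerate) hyperbolic interval with length $\rho_j-\rho_{j-1}$, the sum $\sum_{j=1}^n(\rho_j-\rho_{j-1})=\rho_n-\rho_0=\beta-\alpha$ is immediate in the abelian group $(\D,+)$, and your componentwise variant is an equally valid presentation.

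The paper itself does not supply a proof here; it states the theorem immediately after Definition~\ref{strongpartition} and, in the paragraph preceding it, remarks that allowing equality in condition~(\ref{strongpartition}-\ref{strongpartition2}) ``do not alter the proof of the next theorem,'' implicitly deferring to the argument in~\cite{bbls2016}. Your write-up fills that gap appropriately. One small sharpening: your reference to Example~\ref{ex1} for the non-covering phenomenon is slightly misplaced, since that example concerns a regular (not strong) partition; the more apt illustration is any strong partition with a point $\rho_j$ strictly interior in both components (e.g.\ $\rho_0=0$, $\rho_1=\wt{\tfrac12}$, $\rho_2=\wt{1}$), for which the two diagonal sub-squares visibly omit the off-diagonal quadrants of $[0,\wt{1}]_{\k}$.
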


As Fig. \ref{fg:sp1} shows, Definition \ref{strongpartition} enables degenerate sub-intervals to be built, where $I_2$ and $I_3$ are of this kind of interval. While Fig. \ref{fg:sp2} is the extension of the uniform real partition with norm equal to $\displaystyle\frac{1}{3}$.

\begin{figure}[ht]
\begin{subfigure}{.5\textwidth}
\centering
\includegraphics[scale=1]{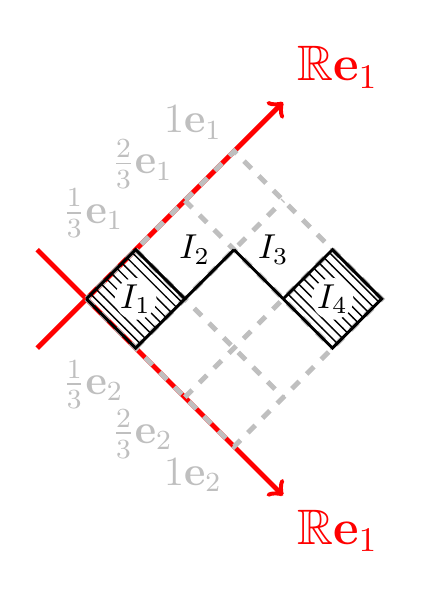}
\caption{ }
\label{fg:sp1}
\end{subfigure}
\begin{subfigure}{.5\textwidth}
\centering
\includegraphics[scale=1]{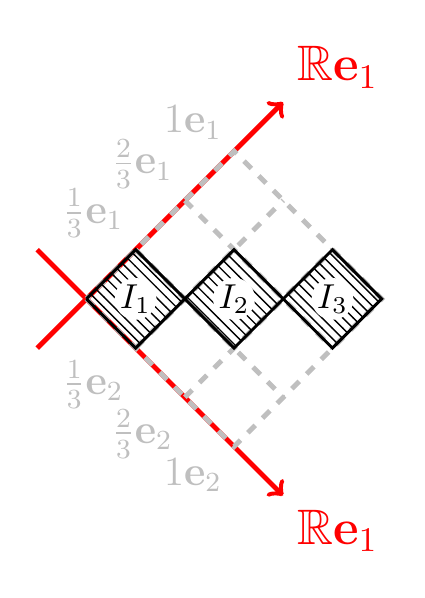}
\caption{ }
\label{fg:sp2}
\end{subfigure}
\caption{Example of strong partition}
\label{fg:sp}
\end{figure}
Strong partitions can generate partitions on real interval, only from the points that define it. So, if $\frakP = \{\rho_1, \rho_2, ..., \rho_n \}$ is an strong partition, let the projection sets
\begin{equation}\label{eq2}
\begin{split}
\frakP_{\e} & = \{ p_{1, 1}, p_{2, 1}, ..., p_{n, 1} \} \\
\frakP_{\ed} & = \{p_{1, 2}, p_{2, 2}, ..., p_{n, 2} \},
\end{split}
\end{equation}
where $\rho_{j} = p_{j, 1}\e + p_{j, 2}\ed$ for every $j \in \{1, 2, ..., n\}$.

\bigskip
It is possible to build real partitions with regular and weak partitions by projections of the endpoints of every intervals. Our interest is in strong partitions because they are in spirit similar to that of the real intervals context.

\subsection{Real partitions define a strong partition}\label{st2.1}

Two real intervals $[a_1, b_1]$ and $[a_2, b_2]$ define the hyperbolic interval $[\alpha, \beta]_{\k}$ with $\alpha = a_1\e + a_2\ed$ and $\beta = b_1\e + b_2\ed$. So, a natural question arise, How can we create a strong partition from two real partitions $P = \{p_0, p_1, ..., p_s\} \subset [a_1, b_1]$ and $Q = \{q_0, q_1, ..., q_t\} \subset [a_2, b_2]$?. 

By definition of strong partition, we need that initial and final points match with $\alpha$ and $\beta$. Therefore we have 
$\alpha = \rho_{0, 0} = p_0\e + q_0\ed$ and $\beta = \rho_{s, t} = p_s\e + q_t\ed$.

The general process to get an hyperbolic point is taking points $p_{s_j} \in P$ and $q_{t_j} \in Q$ with $p_{s_j - 1} \leq p_{s_j}$ and $q_{t_j - 1} \leq q_{t_j}$, but if $p_{s_j} = p_{s_j - 1}$, then $q_{t_j} \in Q \setminus \{q_{t_j - 1}\}$, in a similar way if $q_{t_j} = q_{t_j -1}$, then $p_{s_j} \in P \setminus \{p_{s_j - 1}\}$. We define $\rho_{s_j, t_j} = p_{s_j}\e + q_{t_j}\ed$.

Previous step only can be repeated in a maximum of $s + t$ times. And it finishes when $\rho_{s_j, t_j} = \rho_{s, t}$.

This procedure generates a strong partition $\frakP = \{\rho_{0, 0}, \rho_{s_1, t_1}, ..., \rho_{s, t} \}$. Figure \ref{realp} shows some examples.

\begin{figure}[ht]
\begin{subfigure}{.5\textwidth}
\centering
\includegraphics[scale=0.7]{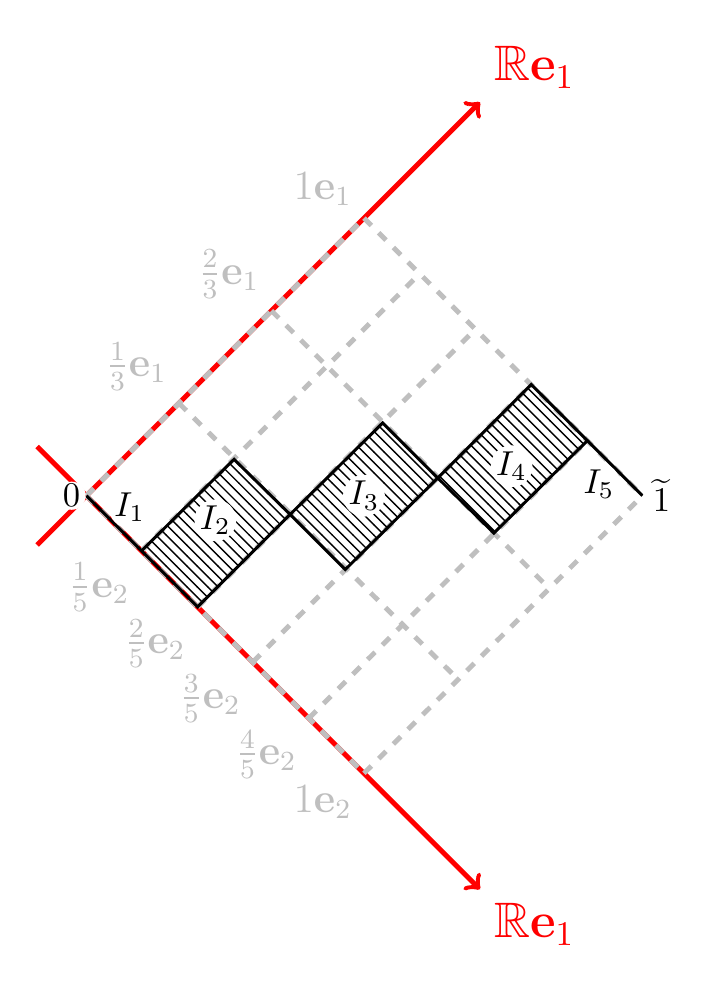}
\caption{ }
\label{realp1}
\end{subfigure}
\begin{subfigure}{.5\textwidth}
\centering
\includegraphics[scale=0.7]{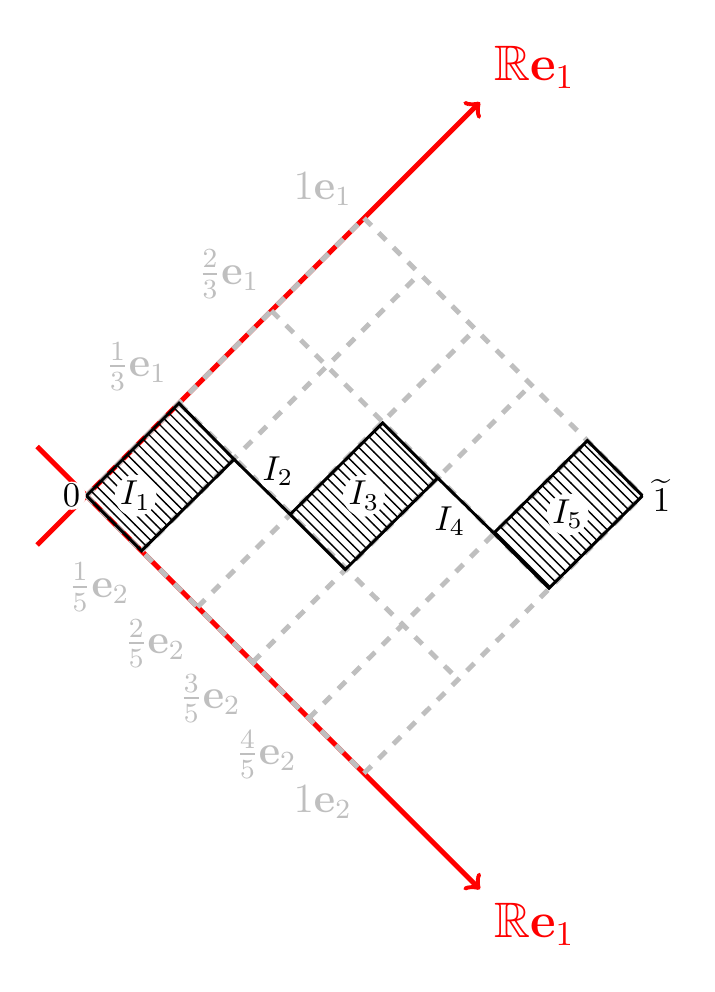}
\caption{ }
\label{realp2}
\end{subfigure}
\caption{Strong partition generated by $\displaystyle P_1 = \left\{0, \frac{1}{3}, \frac{2}{3}, 1\right\}$, $\displaystyle P_2 = \left\{0, \frac{1}{5}, \frac{2}{5}, \frac{3}{5}, \frac{4}{5}, 1\right\}$. Both are partitions on the real interval $[0, 1]$.}
\label{realp}
\end{figure}

\section{Hyperbolic Functions of Bounded Variation}\label{scHFBV}
In this section, the concept of hyperbolic valued functions of bounded variation is introduced. To do this, we give a brief exposition of the notion of supremum of a set in the hyperbolic plane.

\begin{definition}\label{d3}
Let $A$ be a no empty subset of $\D$. The supremum of $A$ is defined to be the number
\[\spk(A) := \sup(A_{\e})\e + \sup(A_{\ed})\ed. \]
\end{definition}

Definition \ref{d3} agrees with the idea that the supremum of $A$ is the least element in $\D$ that is greater than or equal to all elements of $A$. Even when there are elements into the set that are no related, all of them are related with the supremum by the partial order. For a fuller treatment we refer the reader to \cite{tb2017, t2018, lps2014, ssk2020}.

Let $[\alpha, \beta]_{\k} \subset \D$ and $F: [\alpha,  \beta]_{\k} \rightarrow \D$ an hyperbolic valued function with idempotent representation $F = F_{1}\e + F_{2}\ed$. 

If $\frakP = \{\rho_0, \rho_1, ..., \rho_{n_\frakP}\}$ is a strong partition of $[\alpha, \beta]_{\k}$, then we can consider the quantity
\begin{equation}\label{eq1}
\begin{split}
\sum_{j = 0}^{n_\frakP-1} |F(\rho_{j+1}) & - F(\rho_{j})|_{\k} = \\
& \sum_{j=0}^{n_\frakP-1} \left( |F_1(\rho_{j + 1}) - F_1(\rho_{j})|\e + |F_2(\rho_{j + 1}) - F_2(\rho_{j})|\ed \right) =  \\
& \left( \sum_{j=0}^{n_\frakP-1} |F_1(\rho_{j + 1}) - F_1(\rho_{j})| \right)\e + \left( \sum_{j=0}^{n_\frakP-1} |F_2(\rho_{j + 1}) - F_2(\rho_{j})| \right)\ed  = \\
& \left( \sum_{j=0}^{n_\frakP-1} \Delta_{\frakP, j}F_1 \right) \e + \left( \sum_{j=0}^{n_\frakP-1} \Delta_{\frakP, j}F_2 \right)\ed.
\end{split}
\end{equation}

Let $\calP([\alpha, \beta]_{\k})$ denote the family of all strong partitions for $[\alpha, \beta]_{\k}$ and we define the set
\[ \sum_{\calP([\alpha, \beta]_{\k})} (F) := \left\{ \left( \sum_{j=0}^{n_\frakP-1} \Delta_{\frakP, j}F_1 \right) \e + \left( \sum_{j=0}^{n_\frakP-1} \Delta_{\frakP, j}F_2 \right)\ed \  \big| \ \frakP \in \calP([\alpha, \beta]_{\k})  \right\}. \]

\begin{definition}\label{d4}
We say that an hyperbolic valued function $F: [\alpha, \beta]_{\k} \rightarrow \D$ is of bounded variation, when 
\[\spk\left( \sum_{\calP([\alpha, \beta]_{\k})} (F) \right) \prec \infty.\]
\end{definition}

Previous statement is equivalent to say that
\[\left( \sum_{\calP([\alpha, \beta]_{\k})} (F) \right)_{\e} \text{ and } \quad  \left( \sum_{\calP([\alpha, \beta]_{\k})} (F)\right)_{\ed}. \]
are bounded sets in the real line.

\begin{definition}\label{d5}
The total variation of a function $F: [\alpha, \beta]_{\k} \rightarrow \D$ of bounded variation, is the quantity
\[\calV_{[\alpha, \beta]_{\k}}(F) := \spk\left( \sum_{\calP([\alpha, \beta]_{\k})} (F) \right).\]
\end{definition}

According to Def. \ref{d3}, we have 
\[
\begin{split}
\calV_{[\alpha, \beta]_{\k}}(F) & = \spk \left( \sum_{\calP([\alpha, \beta]_{\k})} (F) \right) \\
& = \sup_{\frakP \in \calP([\alpha, \beta]_{\k})}  \left( \sum_{j=0}^{n_\frakP-1} \Delta_{\frakP, j}F_1 \right)\e + \sup_{\frakP \in \calP([\alpha, \beta]_{\k})}  \left( \sum_{j=0}^{n_\frakP-1} \Delta_{\frakP, j}F_2 \right)\ed
\end{split}
\]

Due the bijection from $\D$ to $\R^2$, a function $F = F_1\e + F_2\ed$ may be viewed as a function of two real variables in the Euclidean plane. Therefore, Def. \ref{d4} implies that $F = (F_1, F_2)$ is a function of bounded Vitali variation (see \cite{v1908, f1910, ca1933, ad2015}).

Because every component of a natural hyperbolic function relies from the respective component in the point, the sum in Eq. \ref{eq1} is simplified. So, if $F: [\alpha, \beta]_{\k} \rightarrow \D$ is a natural hyperbolic function, then 
\begin{equation}\label{eq3}
\begin{split}
\sum_{j = 0}^{n_\frakP-1} |F(\rho_{j+1}) & - F(\rho_{j})|_{\k}  = \\
& \left( \sum_{j=0}^{n_\frakP-1} |F_1(p_{j + 1, 1}) - F_1(p_{j, 1})| \right)\e + \left( \sum_{j=0}^{n_\frakP-1} |F_2(p_{j + 1, 2}) - F_2(p_{j, 2})| \right)\ed
\end{split}
\end{equation}

\begin{remark}\label{rk1}
Equation \ref{eq2} implies that the sums in Eq. \ref{eq1}  are taken over the projections $\frakP_{\e}$, $\frakP_{\ed}$, which are partitions of the real intervals $\left( [\alpha, \beta]_{\k} \right)_{\e} = [a_1, b_1]$ and $\left( [\alpha, \beta]_{\k} \right)_{\ed} = [a_2, b_2]$ respectively. 
\end{remark}

Let us denote by $\calP([a_{j}, b_{j}])$ the collection of all partitions of the real interval $[a_{j}, b_{j}]$ for every $j \in \{1, 2\}$ and introduce the sets
\[ \begin{split}
\sum_{\calP([a_{j}, b_{j}])}(F_{j}) = \left\{ \sum_{j=0}^{n_P-1} |F_j(p_{j + 1}) - F_j(p_{j})| \ |\ P \in \calP([a_{j}, b_{j}]) \right\}.
\end{split} \]

\begin{theorem}\label{pt1}
If $F:[\alpha, \beta]_{\k} \rightarrow \D$ is a natural hyperbolic function, then 
\[ \sum_{\calP([\alpha, \beta]_{\k})} (F) = \sum_{\calP([a_{1}, b_{1}])}(F_{1})\e + \sum_{\calP([a_{2}, b_{2}])}(F_{2})\ed. \]
\end{theorem}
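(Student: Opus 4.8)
The plan is to prove the set equality by showing mutual inclusion, exploiting the fact that for a natural hyperbolic function the two idempotent components decouple completely: $F_1$ depends only on the $\e$-coordinate and $F_2$ only on the $\ed$-coordinate. The crucial bridge is the correspondence between strong partitions of $[\alpha,\beta]_{\k}$ and pairs of ordinary partitions of the real intervals $[a_1,b_1]$ and $[a_2,b_2]$, which was essentially set up in Subsection \ref{st2.1}.

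\smallskip

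First I would fix notation: a generic element of $\sum_{\calP([\alpha,\beta]_{\k})}(F)$ comes from a strong partition $\frakP = \{\rho_0,\dots,\rho_{n_\frakP}\}$ and, by Eq.\ \ref{eq3} together with Remark \ref{rk1}, equals
\[
\left( \sum_{j=0}^{n_\frakP-1} |F_1(p_{j+1,1}) - F_1(p_{j,1})| \right)\e
+ \left( \sum_{j=0}^{n_\frakP-1} |F_2(p_{j+1,2}) - F_2(p_{j,2})| \right)\ed ,
\]
where $\frakP_{\e} = \{p_{0,1},\dots,p_{n_\frakP,1}\}$ and $\frakP_{\ed} = \{p_{0,2},\dots,p_{n_\frakP,2}\}$ are (after deleting repetitions) partitions of $[a_1,b_1]$ and $[a_2,b_2]$. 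For the inclusion ``$\subseteq$'' I would argue that the first parenthesised sum is an element of $\sum_{\calP([a_1,b_1])}(F_1)$ and the second of $\sum_{\calP([a_2,b_2])}(F_2)$. The only subtlety is that $\frakP_{\e}$ may list some real point more than once (precisely at the degenerate sub-intervals of Fig.\ \ref{fg:sp1}, where $p_{j,1}=p_{j+1,1}$); but such a repetition contributes a zero term $|F_1(p_{j+1,1})-F_1(p_{j,1})|=0$ to the sum, so collapsing repeated points yields a genuine partition $P$ of $[a_1,b_1]$ with the same value of $\sum|F_1(p_{i+1})-F_1(p_i)|$. Hence each component lands in the corresponding real sum-set, giving the inclusion.

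\smallskip

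For the reverse inclusion ``$\supseteq$'', take arbitrary partitions $P=\{p_0,\dots,p_s\}$ of $[a_1,b_1]$ and $Q=\{q_0,\dots,q_t\}$ of $[a_2,b_2]$; I must produce a single strong partition $\frakP$ of $[\alpha,\beta]_{\k}$ whose $\e$-component sum is $\sum_{i}|F_1(p_{i+1})-F_1(p_i)|$ and whose $\ed$-component sum is $\sum_{i}|F_2(q_{i+1})-F_2(q_i)|$. This is exactly the merging construction of Subsection \ref{st2.1}: interleave the two real partitions, advancing the $\e$-coordinate through $P$ and the $\ed$-coordinate through $Q$ (for definiteness, advance whichever coordinate is currently smaller, breaking ties by advancing both), obtaining a chain $\rho_{0,0}\preceq\rho_{s_1,t_1}\preceq\dots\preceq\rho_{s,t}$ from $\alpha$ to $\beta$ in at most $s+t$ steps. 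By construction the multiset of $\e$-increments of $\frakP$ runs through all consecutive pairs of $P$ (with some null increments inserted at steps where only the $\ed$-coordinate moved), and symmetrically for the $\ed$-increments and $Q$; since null increments contribute $0$, the two component sums of this $\frakP$ are exactly the prescribed real sums. Therefore the element $\bigl(\sum_i|F_1(p_{i+1})-F_1(p_i)|\bigr)\e + \bigl(\sum_i|F_2(q_{i+1})-F_2(q_i)|\bigr)\ed$ lies in $\sum_{\calP([\alpha,\beta]_{\k})}(F)$, and since $P,Q$ were arbitrary this proves $\supseteq$.

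\smallskip

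The main obstacle is this last direction, and specifically the bookkeeping in the merging step: one has to be sure the interleaving can always be completed so that \emph{both} $P$ and $Q$ are fully exhausted simultaneously at the common endpoint $\beta$, that the resulting point set is a chain (which is automatic since all points share the form $p\e+q\ed$ with $p,q$ real and any two such are comparable) and that no spurious nonzero increments are introduced. Once the combinatorics of Subsection \ref{st2.1} is invoked cleanly, the equality of the two sum-sets follows; I would close by remarking that taking $\spk$ of both sides and using Definition \ref{d3} recovers the component-wise formula for $\calV_{[\alpha,\beta]_{\k}}(F)$ already displayed after Definition \ref{d5}, so that $F$ is of bounded variation if and only if each $F_j$ is of bounded variation on $[a_j,b_j]$ in the ordinary sense.
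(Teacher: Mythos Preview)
Your proposal is correct and follows essentially the same approach as the paper's own proof: both directions are handled by mutual inclusion, with the forward inclusion coming from projecting a strong partition onto its idempotent components (Eq.\ \ref{eq3} and Remark \ref{rk1}) and the reverse inclusion coming from the merging construction of Subsection \ref{st2.1}, in each case noting that degenerate sub-intervals contribute zero terms. Your version is somewhat more careful than the paper's in explicitly addressing the collapse of repeated points in $\frakP_{\e},\frakP_{\ed}$ and in spelling out the interleaving bookkeeping, but the argument is the same.
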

\begin{proof}
Taking an element in the set $\displaystyle  \sum_{\calP([\alpha, \beta]_{\k})} (F)$, by Remark \ref{rk1}, the projections $\frakP_{\e}$ and $\frakP_{\ed}$ are partitions over $[a_1, b_1]$ and $[a_2, b_2]$ respectively and it has the form in the Eq. \ref{eq3}.

Reciprocally, two partitions $P \in \calP([a_{1}, b_{1}])$ and $Q \in \calP([a_{2}, b_{2}])$ define a strong partition $\frakP$ (see Section \ref{st2.1}). Partition $\frakP$ fulfill with $\frakP_{\e} = P$ and $\frakP_{\ed} = Q$, therefore even if the process in Sec. \ref{st2.1} generates $n_{\frakP} =n_{P} + n_{Q}$ points, where $n_{P}$ and $n_{Q}$ denote the cardinality of $P$ and $Q$, no additional elements in the sum are added, since in degenerated intervals $|F_1(p_{j + 1}) - F_1(p_{j})| = 0$ or $|F_2(p_{j + 1}) - F_2(p_{j})| = 0$, implying that
\[ \begin{split}
\left( \sum_{j=0}^{n_\frakP-1} \Delta_{\frakP, j}F_1 \right) \e & + \left( \sum_{j=0}^{n_\frakP-1} \Delta_{\frakP, j}F_2 \right)\ed =  \\
& \left(\sum_{j=0}^{n_P-1} |F_1(p_{j + 1}) - F_1(p_{j})| \right)\e + \left( \sum_{j=0}^{n_Q-1} |F_2(q_{j + 1}) - F_2(q_{j})| \right)\ed. 
\end{split} \]
\end{proof}
Combining Def. \ref{d4} with Thm. \ref{pt1}, hyperbolic valued functions of bounded variation are constructed.
\begin{corollary}\label{crBVC}
Let $F:[\alpha, \beta]_{\k} \rightarrow \D$ be a natural hyperbolic function. The function $F$ is of hyperbolic bounded variation if and only if the idempotent component functions $F_1:[a_1, b_1] \rightarrow \R$ and $F_2:[a_2, b_2] \rightarrow \R$ are functions of real bounded variation.
\end{corollary}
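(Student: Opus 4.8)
The plan is to read off the statement by translating the hyperbolic bounded variation condition into its two real shadows, using Theorem~\ref{pt1} as the bridge and the equivalent reformulation of Definition~\ref{d4} recorded just after it.

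First I would recall that, by Definition~\ref{d4} and the equivalent formulation stated immediately below it, $F$ is of hyperbolic bounded variation if and only if the two real sets $\bigl(\sum_{\calP([\alpha,\beta]_{\k})}(F)\bigr)_{\e}$ and $\bigl(\sum_{\calP([\alpha,\beta]_{\k})}(F)\bigr)_{\ed}$ are bounded subsets of $\R$ (they consist of nonnegative reals, so ``$\prec\infty$'' is just boundedness above). Thus the whole corollary reduces to identifying these two idempotent projections with the classical variation sets $\sum_{\calP([a_1,b_1])}(F_1)$ and $\sum_{\calP([a_2,b_2])}(F_2)$.

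Next I would invoke Theorem~\ref{pt1}, which, since $F$ is a natural hyperbolic function, gives
\[ \sum_{\calP([\alpha,\beta]_{\k})}(F) = \sum_{\calP([a_1,b_1])}(F_1)\,\e + \sum_{\calP([a_2,b_2])}(F_2)\,\ed . \]
Here one only needs the elementary fact that for nonempty real sets $A,B\subset\R$ the idempotent projections of $A\e+B\ed$ are exactly $A$ and $B$; nonemptiness holds because each real interval $[a_j,b_j]$ admits at least the trivial partition $\{a_j,b_j\}\in\calP([a_j,b_j])$, so $\sum_{\calP([a_j,b_j])}(F_j)\neq\varnothing$. Consequently
\[ \Bigl(\sum_{\calP([\alpha,\beta]_{\k})}(F)\Bigr)_{\e} = \sum_{\calP([a_1,b_1])}(F_1), \qquad \Bigl(\sum_{\calP([\alpha,\beta]_{\k})}(F)\Bigr)_{\ed} = \sum_{\calP([a_2,b_2])}(F_2). \]

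Finally I would combine the two observations: $F$ is of hyperbolic bounded variation if and only if both $\sum_{\calP([a_1,b_1])}(F_1)$ and $\sum_{\calP([a_2,b_2])}(F_2)$ are bounded in $\R$, and by the classical definition this is precisely the assertion that $F_1$ and $F_2$ are real-valued functions of bounded variation on $[a_1,b_1]$ and $[a_2,b_2]$. In the ``only if'' direction one moreover obtains from Definition~\ref{d5} that $\calV_{[\alpha,\beta]_{\k}}(F)$ is the idempotent combination of the two real total variations. There is no genuinely hard step here; the single point demanding a line of care is the projection identity $(A\e+B\ed)_{\e}=A$, $(A\e+B\ed)_{\ed}=B$, whose validity rests on the nonemptiness of $A$ and $B$ noted above.
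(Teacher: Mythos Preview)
Your argument is correct and follows precisely the route the paper indicates: the corollary is stated right after the sentence ``Combining Def.~\ref{d4} with Thm.~\ref{pt1}, hyperbolic valued functions of bounded variation are constructed,'' and you have simply spelled out that combination in detail, including the (routine) projection identity $(A\e+B\ed)_{\e}=A$, $(A\e+B\ed)_{\ed}=B$. There is nothing to add or correct.
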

On account of this result the set of discontinuities for an natural hyperbolic function of bounded variation is well defined, which is due to the fact that a real function of bounded variation only has jump discontinuities and therefore the set of discontinuities is numerable, see \cite[Sec. 6.8]{a1974}.
\begin{lemma}
If $F:[\alpha, \beta]_{\k} \rightarrow \D$ is  a natural hyperbolic functions of bounded variation, then the set of discontinuities is the numerable union of perpendicular line segments to idempotent axes.
\end{lemma}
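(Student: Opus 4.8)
The plan is to reduce the statement to the componentwise structure of a natural hyperbolic function together with the classical fact, recalled just above, that a real function of bounded variation has an at most countable set of discontinuities. The one point that needs care is to localize the continuity criterion of Section \ref{sc1} to a single point. Writing a generic point of $[\alpha,\beta]_{\k}$ as $\xi = x\e + y\ed$ and $F = F_1\e + F_2\ed$, for a natural hyperbolic function we have $F(\xi) = F_1(x)\e + F_2(y)\ed$, so the increment $F_1(x) - F_1(x_0)$ depends on $x$ alone and $F_2(y) - F_2(y_0)$ on $y$ alone. Unfolding the definition of continuity with the usual hyperbolic metric at $\xi_0 = x_0\e + y_0\ed$ — where $\epsilon = \epsilon_1\e + \epsilon_2\ed \in \D_{+}$, $\delta = \delta_1\e + \delta_2\ed \in \D_{+}$, the condition $|\xi - \xi_0|_{\k} \prec \delta$ reads $|x - x_0| < \delta_1 \land |y - y_0| < \delta_2$, and $|F(\xi) - F(\xi_0)|_{\k} \prec \epsilon$ reads $|F_1(x) - F_1(x_0)| < \epsilon_1 \land |F_2(y) - F_2(y_0)| < \epsilon_2$ — the two idempotent coordinates decouple completely. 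Hence $F$ is continuous at $\xi_0$ if and only if $F_1$ is continuous at $x_0$ and $F_2$ is continuous at $y_0$; equivalently, $F$ is discontinuous at $x_0\e + y_0\ed$ exactly when $x_0$ is a discontinuity of $F_1$ or $y_0$ is a discontinuity of $F_2$.

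Now let $D_j \subset [a_j, b_j]$ denote the set of discontinuities of $F_j$, $j\in\{1,2\}$. By Corollary \ref{crBVC}, $F_1$ and $F_2$ are of real bounded variation, so by \cite[Sec. 6.8]{a1974} the sets $D_1$ and $D_2$ are numerable. Combining this with the previous paragraph, the set of discontinuities of $F$ is
\[
\{\, x_0\e + y_0\ed \;:\; x_0 \in D_1 \ \text{or}\ y_0 \in D_2 \,\}
= \bigcup_{x_0 \in D_1} \big( x_0\e + [a_2,b_2]\ed \big) \;\cup\; \bigcup_{y_0 \in D_2} \big( [a_1,b_1]\e + y_0\ed \big).
\]
Each set $x_0\e + [a_2,b_2]\ed$ is a line segment perpendicular to the $\e$-axis and each $[a_1,b_1]\e + y_0\ed$ is a line segment perpendicular to the $\ed$-axis, and both index sets are numerable; therefore the right-hand side is a numerable union of line segments perpendicular to the idempotent axes, which is exactly the assertion.

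The only genuinely non-routine step is the localization carried out in the first paragraph: one must be sure that continuity of a natural hyperbolic function really splits, point by point, into continuity of $F_1$ in $x$ and of $F_2$ in $y$. This is precisely where the hypothesis that $F$ is \emph{natural} is essential — the independence of $F_1$ from $y$ and of $F_2$ from $x$ is what forces the discontinuity locus to be a union of axis-perpendicular segments rather than an arbitrary small set; for a general $F:[\alpha,\beta]_{\k}\to\D$ of bounded Vitali variation this geometric description would not hold.
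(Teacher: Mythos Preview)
Your proof is correct and follows essentially the same approach as the paper: reduce via Corollary \ref{crBVC} to the real bounded-variation components, invoke the countability of their discontinuity sets, and then read off the discontinuity locus of $F$ as a countable union of axis-perpendicular segments. Your version is in fact slightly more careful than the paper's, which appeals to Section \ref{sc1} for the pointwise equivalence ``$F$ continuous at $\xi_0$ iff $F_1$ continuous at $x_0$ and $F_2$ continuous at $y_0$'' even though that section only states the global form; you supply the easy localization explicitly.
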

\begin{proof}
The components $F_1$ and $F_2$ from $F$ are real functions of bounded variation so, there exist two set $\{x_{1, n}\}_{n \in \N} \subset [a_1, b_1]$ and $\{x_{2, n}\}_{n \in \N} \subset [a_2, b_2]$ of with all discontinuities for $F_1$ and $F_2$ respectively.

For every point $y \in [a_2, b_2]$ and $n \in \N$, the point $x_{1, n}\e + y\ed$ is a point of discontinuity for $F$ (See Sec. \ref{sc1}). Thus, the set of discontinuities contains the union $\displaystyle \bigcup_{n \in \N} x_{1, n}\e + [a_2, b_2]\ed$.

Similarly, the set of discontinuities of $F$ contains the union $\displaystyle \bigcup_{n \in \N} [a_1, b_1]\e + x_{2, n}\ed$.

The union $\displaystyle \frakD(F) = \left( \bigcup_{n \in \N} x_{1, n}\e + [a_2, b_2]\ed \right) \cup \left( \bigcup_{n \in \N} [a_1, b_1]\e + x_{2, n}\ed \right)$ contains all discontinuities of $F$, because if there exist $\xi = x\e + y\ed$ a discontinuity of $F$, then $x$ is a discontinuity of $F_1$ or $y$ is a discontinuity of $F_2$, but this implies that $x \in \{x_{1, n}\}_{n \in \N}$ or $y \in \{x_{2, n}\}_{n \in \N}$.
\end{proof}

\begin{theorem}
The set of discontinuities from a natural hyperbolic function of bounded variation is of zero measure with the Lebesgue measure in the Euclidean plane.
\end{theorem}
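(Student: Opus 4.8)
The plan is to deduce the statement directly from the preceding lemma, reducing it to a routine measure-theoretic fact about countable unions of axis-parallel segments. By that lemma, if $F\colon[\alpha,\beta]_{\k}\to\D$ is a natural hyperbolic function of bounded variation with idempotent components $F_1$, $F_2$, then its discontinuity set is
\[ \frakD(F) = \Bigl( \bigcup_{n \in \N} x_{1,n}\e + [a_2,b_2]\ed \Bigr) \cup \Bigl( \bigcup_{n \in \N} [a_1,b_1]\e + x_{2,n}\ed \Bigr), \]
where $\{x_{1,n}\}_{n\in\N}\subset[a_1,b_1]$ and $\{x_{2,n}\}_{n\in\N}\subset[a_2,b_2]$ are the at most countable discontinuity sets of $F_1$ and $F_2$ (a real function of bounded variation has only jump discontinuities, cf.\ \cite[Sec.~6.8]{a1974}). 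Under the identification of $\D$ with the Euclidean plane via $x\e + y\ed \leftrightarrow (x,y)$, each $x_{1,n}\e + [a_2,b_2]\ed$ is the vertical segment $\{x_{1,n}\}\times[a_2,b_2]$ and each $[a_1,b_1]\e + x_{2,n}\ed$ is the horizontal segment $[a_1,b_1]\times\{x_{2,n}\}$.

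First I would check that every such segment has planar Lebesgue measure zero. For any $\varepsilon>0$ the segment $\{x_{1,n}\}\times[a_2,b_2]$ is contained in the rectangle $(x_{1,n}-\varepsilon,\,x_{1,n}+\varepsilon)\times[a_2,b_2]$, whose area is $2\varepsilon(b_2-a_2)\to 0$ as $\varepsilon\to 0$; hence its outer measure vanishes. Exchanging the roles of the two axes gives the same conclusion for the horizontal segments $[a_1,b_1]\times\{x_{2,n}\}$.

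Then I would conclude by countable subadditivity of the Lebesgue measure: $\frakD(F)$ is the union of the two countable families $\{\{x_{1,n}\}\times[a_2,b_2]\}_{n\in\N}$ and $\{[a_1,b_1]\times\{x_{2,n}\}\}_{n\in\N}$ of null sets, so $\mu_\R(\frakD(F)) = 0$.

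I do not expect a genuine obstacle in this argument; the only points needing a line of justification are the identification of the ``perpendicular line segments to the idempotent axes'' supplied by the lemma with honest axis-parallel segments of $\R^2$, together with the standard facts that a line segment is planar-null and that a countable union of null sets is null. One could equivalently invoke Tonelli's theorem, noting that almost every vertical slice of $\bigcup_n\{x_{1,n}\}\times[a_2,b_2]$ is empty (it is nonempty only on the countable, hence null, set $\{x_{1,n}\}_{n\in\N}$), but the direct covering estimate above is the shortest route.
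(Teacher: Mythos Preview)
Your proof is correct and follows exactly the same route as the paper: invoke the preceding lemma to exhibit $\frakD(F)$ as a countable union of axis-parallel segments, observe that each such segment has planar Lebesgue measure zero, and conclude by countable subadditivity. The paper compresses this into a single sentence, while you spell out the covering estimate for the segments and the subadditivity step, but there is no difference in substance.
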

\begin{proof}
It is a consequence that every line in the Euclidean plane has zero measure and numerable union of these set again has zero measure.
\end{proof}

This result can not be translated to hyperbolic Lebesgue measure defined in \cite{ks2017}. Since, the Lebesgue measure is defined as $\mu = \mu_{\R}\e + \mu_{\R}\ed$, where $\mu_{\R}$ is the Lebesgue measure in the real line, implies that the set $x_{1, n}\e + [a_2, b_2]\ed$ have not zero measure for all $n \in \N$, because $[a_2, b_2]$ is not a real set of zero measure.

\section{Hyperbolic Valued Riemann-Stieltjes Integral}\label{scHRSI}

Strong partitions can be applied to define a Riemann-Stieltjes type integral over hyperbolic valued functions.

The diameter of a real partition $P$ is defined as the maximum into the set of all lengths of successive intervals generated by $P$,
\[\diam(P) = \max \{ \lambda([p_j+1, p_j]) \ |\ j \in \{0, ..., n-1\} \}. \]

Although the diameter of a partition can be extended to strong partitions in the hyperbolic numbers plane in a direct way, it is not convenient for an extension of Riemann-Stieltjes integral because even if the maximum length is taken, its projections could not coincide with the diameter of the projections of the strong partition.

\begin{definition}\label{d7}
Let $\frakP$ be a strong partition of $[\alpha, \beta]_{\k}$. The diameter of $\frakP$ is defined to be the hyperbolic number
\[\diam_{\k}(\frakP) = \diam(\frakP_{\e})\e + \diam(\frakP_{\ed})\ed. \]
\end{definition}

Although, Riemann-Stieltjes integral can be defined on general hyperbolic valued functions over an interval, our focus will be on the case of natural hyperbolic functions.

\begin{definition}\label{d6}
Let $F: [\alpha, \beta]_{\k} \rightarrow \D$ and $G: \D \rightarrow \D$ be two hyperbolic functions. An hyperbolic number $\calI$ is called the Riemann-Stieltjes integral of $F$ respect to $G$, if for every $\epsilon \in \D^{+}$ there exists a $\delta \in \D^{+}$ such that 
\[\left| S_{\k}(\frakP, F, G) -  \calI \right|_{\k} = \left| \sum_{j = 0}^{n_{\frakP} - 1} F(\gamma_{j}) \left| G(\rho_{j + 1}) - G(\rho_{j}) \right|_{\k} - \calI \right|_{\k} \prec \epsilon,\]
for any strong partition $\frakP \in \calP([\alpha, \beta]_{\k})$ that fulfill the property $\diam_{\k}(\frakP) \prec \delta$ and  whatever selection $\gamma_j \in [\rho_{j+1}, \rho_j]_{\k}$, with $j \in \{0, ..., n_{\frakP} - 1\}.$
\end{definition}

The quantity $S_{\k}(\frakP, F, G)$ is called the Riemann-Stieltjes sum. In addition, when such $\calI \in \D$ exists, it will be denoted by $\displaystyle \calI = \int_{\alpha}^{\beta} F d_{\k}G$.

When $F$ and $G$ in Def. \ref{d6} are assumed to be natural hyperbolic functions, the Riemann-Stieltjes sum is analogue to Eq. \ref{eq3} and hence
\[\begin{split}
S_{\k}&(\frakP, F, G) = \\
  &\left(\sum_{j = 0}^{n_{\frakP} - 1} F_{1}(y_{j, 1})\left| G_{1}(p_{j + 1, 1}) - G_1(p_{j, 1})\right| \right)\e + \left(\sum_{j = 0}^{n_{\frakP} - 1} F_{2}(y_{j, 2})\left| G_{2}(p_{j + 1, 2}) - G_2(p_{j, 2})\right| \right)\ed. 
\end{split}\]

But $F_1$ and $F_2$ are real valued functions defined on the respective projections of $[\alpha, \beta]_{\k}$, likewise $G_1$ and $G_2$ are real valued functions defined on the whole real line. Thus, the Riemann-Stieltjes over the hyperbolic plane is the sum of classic Riemann-Stieltjes sum on the partition generated by projections of $\frakP$ (see \cite[Sec. 7.3]{a1974}).
\[S_{\k}(\frakP, F, G) = S(\frakP_{\e}, F_1, G_1)\e + S(\frakP_{\ed}, F_2, G_2). \]

By definition of usual metric on $\D$ (for more details Sec. \ref{sc1}) and when natural hyperbolic functions are assumed, we have
\begin{equation}\label{eq-rsk}
\int_{\alpha}^{\beta} F d_{\k}G = \left(\int_{a_1}^{b_1} F_1 dG_1\right)\e + \left( \int_{a_2}^{b_2} F_2 dG_2 \right)\ed,
\end{equation}
where the integrals in the left side in Eq. \ref{eq-rsk} are classical real valued Riemann-Stieltjes integrals.

\begin{theorem}\label{tmERS}
An hyperbolic natural function $F:[\alpha, \beta]_{\k} \rightarrow \D$ is hyperbolic Riemann-Stieltjes integrable with respect to a natural hyperbolic function $G: \D \rightarrow \D$ if and only if every component $F_1:[a_1, b_1] \rightarrow \R$ and $F_{2}:[a_2, b_2] \rightarrow \R$ are real Riemann-Stieltjes integrable functions respect to $G_{1}:\R \rightarrow \R$ and $G_{2}: \R \rightarrow \R$.
\end{theorem}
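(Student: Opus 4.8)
The plan is to reduce the whole statement to two facts already assembled just before it: the componentwise decomposition of Riemann--Stieltjes sums for natural hyperbolic functions,
\[
S_{\k}(\frakP, F, G) = S(\frakP_{\e}, F_1, G_1)\e + S(\frakP_{\ed}, F_2, G_2)\ed,
\qquad
\diam_{\k}(\frakP) = \diam(\frakP_{\e})\e + \diam(\frakP_{\ed})\ed,
\]
and the two-way correspondence of Section \ref{st2.1} between strong partitions of $[\alpha,\beta]_{\k}$ and pairs of real partitions of $[a_1,b_1]$ and $[a_2,b_2]$. With these in hand the equivalence is essentially the observation that a hyperbolic $\epsilon$-$\delta$ condition is exactly a pair of real $\epsilon$-$\delta$ conditions, one in each idempotent component, and that convergence along $\diam_{\k}(\frakP)\prec\delta$ is convergence of each projected mesh to $0$. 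The candidate value of the hyperbolic integral is forced to be $\calI = I_1\e + I_2\ed$, where $I_j = \int_{a_j}^{b_j} F_j\, dG_j$, which is precisely Eq. \ref{eq-rsk}.

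For the implication from real integrability to hyperbolic integrability, I would assume $F_j$ is real Riemann--Stieltjes integrable with respect to $G_j$ with integral $I_j$, set $\calI = I_1\e + I_2\ed$, and take an arbitrary $\epsilon = \epsilon_1\e + \epsilon_2\ed \in \D^{+}$. Real integrability supplies $\delta_1,\delta_2>0$ controlling $|S(P,F_j,G_j) - I_j|<\epsilon_j$ whenever $\diam(P)<\delta_j$, for every choice of real tags; put $\delta = \delta_1\e + \delta_2\ed \in \D^{+}$. Given any strong partition $\frakP$ with $\diam_{\k}(\frakP)\prec\delta$ and any hyperbolic tags $\gamma_j = y_{j,1}\e + y_{j,2}\ed$, the projection $\frakP_{\e}$ is a real partition of $[a_1,b_1]$ of mesh $<\delta_1$, the numbers $y_{j,1}$ are admissible tags for it, and the terms coming from hyperbolic subintervals that are degenerate in the $\e$-direction vanish because $G_1(p_{j+1,1}) - G_1(p_{j,1}) = 0$; hence $S(\frakP_{\e},F_1,G_1)$ is a genuine real Riemann--Stieltjes sum and $|S(\frakP_{\e},F_1,G_1) - I_1|<\epsilon_1$, and symmetrically in the $\ed$-direction. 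Applying $|\cdot|_{\k}$ to the decomposition then yields $|S_{\k}(\frakP,F,G) - \calI|_{\k}\prec\epsilon$, which is Definition \ref{d6}.

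For the converse, I would assume $F$ is hyperbolic Riemann--Stieltjes integrable with respect to $G$, write its integral as $\calI = I_1\e + I_2\ed$, fix $\epsilon_1>0$, and apply Definition \ref{d6} with $\epsilon = \epsilon_1\e + \epsilon_1\ed$ to obtain $\delta = \delta_1\e + \delta_2\ed \in \D^{+}$. Now, given any real partition $P$ of $[a_1,b_1]$ with $\diam(P)<\delta_1$ and any real tags, I would choose any auxiliary partition $Q$ of $[a_2,b_2]$ with $\diam(Q)<\delta_2$ together with arbitrary tags; by Section \ref{st2.1} the pair $(P,Q)$ generates a strong partition $\frakP$ with $\frakP_{\e}=P$, $\frakP_{\ed}=Q$, admitting hyperbolic tags whose projections are the chosen real tags, and with $\diam_{\k}(\frakP) = \diam(P)\e + \diam(Q)\ed \prec \delta$. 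The decomposition gives $|S(P,F_1,G_1) - I_1|\e + |S(Q,F_2,G_2) - I_2|\ed \prec \epsilon_1\e + \epsilon_1\ed$, so in particular $|S(P,F_1,G_1) - I_1|<\epsilon_1$; since $P$ and its tags were arbitrary, $F_1$ is real Riemann--Stieltjes integrable with respect to $G_1$ with integral $I_1$, and the same argument with the roles of the two components exchanged handles $F_2$.

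The main obstacle, and the only point that needs genuine care, is the bookkeeping at degenerate subintervals together with the cardinality mismatch $n_{\frakP} = n_P + n_Q$ built into the construction of Section \ref{st2.1}: one must verify that collapsing a hyperbolic subinterval in one idempotent direction produces a zero-length real subinterval in that direction whose Riemann--Stieltjes term is $0$ irrespective of the (then forced) tag, so that the projected sums $S(\frakP_{\e},F_1,G_1)$ and $S(\frakP_{\ed},F_2,G_2)$ are honest real Riemann--Stieltjes sums over $\frakP_{\e}$ and $\frakP_{\ed}$ with admissible tags. This is exactly the phenomenon already exploited in the proof of Theorem \ref{pt1}, so once it is invoked the rest of the argument is routine, and the identity Eq. \ref{eq-rsk} drops out as a by-product.
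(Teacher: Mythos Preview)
Your proposal is correct and follows precisely the approach the paper intends: the paper's own proof consists of the single sentence ``The proof is followed by previous comments,'' and what you have written is exactly the careful unpacking of those comments (the componentwise decomposition $S_{\k}(\frakP,F,G)=S(\frakP_{\e},F_1,G_1)\e+S(\frakP_{\ed},F_2,G_2)\ed$, Definition~\ref{d7}, and the partition correspondence of Section~\ref{st2.1}), including the degenerate-interval bookkeeping already used in Theorem~\ref{pt1}. There is no methodological difference; you have simply supplied the details the paper omits.
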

\begin{proof}
The proof is followed by previous comments.
\end{proof}
\bigskip
\noindent
From now on, we make the assumption that $F$ and $G$ are natural hyperbolic functions.

Introduce the identity function $Id_{\k}:\D \rightarrow \D$, which is a natural hyperbolic function
\[Id_{\k}(\xi) = \xi = x_1\e+x_2\ed = Id(x_1)\e + Id(x_2)\ed.\]
There is a very close connection between the Riemann–Stieltjes integral and the Riemann integral we are aiming to classify. Indeed, the hyperbolic Riemann-Stieltjes integral of a natural hyperbolic function $F$ with respect to $Id_{\k}$ can be viewed as the hyperbolic Riemann integral introduced in \cite[Ch. IV]{vd1935} or a particular case of Lebesgue integral following \cite[Sec. 3]{ks2017}.

Results in \cite{vd1935} requires non-self-intersecting continuous loop (Jordan curve). Taking the straight line that joins the two extreme points of an hyperbolic interval we get a loop of this kind. Therefore, for every strong partition $\frakP$ the union of lines that join every sub-interval $[\rho_{j+1}, \rho_{j}]_{\k}$, where $j \in \{0, ..., n-1\}$, is a Jordan loop and
\[\int_{\alpha}^{\beta} F d_{\k}Id_{\k}  = \int_{\alpha}^{\beta} F d_{\k}\xi = \left(\int_{a_1}^{b_1} F_1 dx_1 \right) \e + \left( \int_{a_2}^{b_2} F_2 dx_2 \right) \ed. \]

Another way to consider the hyperbolic Riemann integral is through the use of an hyperbolic valued measure $\mu_{\k} := \mu_{\R}\e + \mu_{\R}\ed$. Taking into account the definition of Lebesgue integral in \cite{ks2017} and since the real Lebesgue integral restricted to a closed interval reduces to the Riemann integral, the relation 
\[\int_{\alpha}^{\beta} F d_{\k}\xi = \int_{[\alpha, \beta]_{\k}} F d\mu_{\k} = \left(\int_{[a_1, b_1]}F_{1}d\mu_{\R} \right)\e + \left( \int_{[a_2, b_2]} F_2d\mu_{\R} \right)\ed\]
holds.

Let us mention an important property of the hyperbolic Riemann-Stieltjes integral when the integrator is an holomorphic function.
\begin{theorem}\label{thRSR}
Let $G: \D \rightarrow \D$ be an holomorphic and continuously differentiable function, $F:[\alpha, \beta] \rightarrow \D$ a natural hyperbolic Riemann-Stieltjes integrable function with respect to $G$. Then
\[\int_{\alpha}^{\beta} F d_{\k}G =  \int_{\alpha}^{\beta} FG' d_{\k} \xi. \]
\end{theorem}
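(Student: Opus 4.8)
The plan is to push everything through the idempotent decomposition and then quote the classical reduction theorem for real Riemann--Stieltjes integrals. Since $G$ is holomorphic, the Cauchy--Riemann analysis recalled above gives $G = G_1\e + G_2\ed$ with $G_1, G_2$ real functions of a single real variable and $G' = G_1'\e + G_2'\ed$, where $G_j' = dG_j/dx_j$; the assumption that $G$ is continuously differentiable means exactly that each $G_j'$ is continuous on the respective real interval. Likewise $F = F_1\e + F_2\ed$ is natural, and the product $FG'$ is again a natural hyperbolic function with $(FG')_1 = F_1 G_1'$ and $(FG')_2 = F_2 G_2'$, because multiplication in the idempotent representation is performed coordinatewise and each factor depends only on its own coordinate.

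First I would invoke Eq.~\ref{eq-rsk} to split the left-hand side,
\[
\int_{\alpha}^{\beta} F\, d_{\k}G = \left(\int_{a_1}^{b_1} F_1\, dG_1\right)\e + \left(\int_{a_2}^{b_2} F_2\, dG_2\right)\ed ,
\]
and use Theorem~\ref{tmERS} to conclude that each $F_j$ is real Riemann--Stieltjes integrable with respect to $G_j$. Then, for $j\in\{1,2\}$, I apply the classical theorem that reduces a Riemann--Stieltjes integral with a $C^1$ integrator to an ordinary Riemann integral (see \cite[Sec.~7.3]{a1974}): from $G_j\in C^1([a_j,b_j])$ and $F_j$ integrable with respect to $G_j$ it follows that $F_j G_j'$ is Riemann integrable on $[a_j,b_j]$ and $\int_{a_j}^{b_j} F_j\, dG_j = \int_{a_j}^{b_j} F_j G_j'\, dx_j$.

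Finally I recombine the two scalar equalities with $\e$ and $\ed$. The resulting right-hand side is precisely the idempotent form of $\int_{\alpha}^{\beta} FG'\, d_{\k}\xi$ given by Eq.~\ref{eq-rsk} applied to the integrand $FG'$ and the integrator $Id_{\k}$; moreover the existence of the two component Riemann integrals, together with Theorem~\ref{tmERS} for the integrator $Id_{\k}$, guarantees that $\int_{\alpha}^{\beta} FG'\, d_{\k}\xi$ is well defined. The step that needs the most care --- the only real obstacle --- is this last bookkeeping: checking that $FG'$ is genuinely a natural hyperbolic function whose components coincide with the integrands of the classical theorem, and that its hyperbolic Riemann--Stieltjes integrability against $Id_{\k}$ is \emph{obtained}, not assumed, from the componentwise Riemann integrability supplied by \cite[Sec.~7.3]{a1974}. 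Everything else is a componentwise transcription of a standard real-variable identity.
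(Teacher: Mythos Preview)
Your proposal is correct and follows essentially the same route as the paper: decompose into idempotent components via Eq.~\ref{eq-rsk} and Theorem~\ref{tmERS}, apply the classical real reduction $\int F_j\,dG_j = \int F_j G_j'\,dx_j$ for a $C^1$ integrator (the paper cites \cite[Thm.~7.8]{a1974} rather than \cite[Sec.~7.3]{a1974}, and inserts the remark that $G_j\in C^1$ forces $G_j$ to be of bounded variation), and then recombine. Your version is in fact slightly more explicit than the paper's in checking that $FG'$ is natural and that the right-hand integral exists via Theorem~\ref{tmERS} applied to $Id_{\k}$.
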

\begin{proof}
By holomorphic assumption on $G$, it is a natural hyperbolic functions with derivative $G'(\xi) = G_1'(x)\e + G_2'(y)\ed$, for $\xi \in \D$. Also, since $G$ is continuously differentiable its idempotent component have continuous derivatives of any order. Therefore $G_1$ and $G_2$ are functions of bounded variation and \cite[Thm. 7.8]{a1974} makes easy to see that
\[ \int_{a_1}^{b_1} F_1 dG_1 = \int_{a_1}^{b_1} F_1G_1' dx_1\quad \text{and} \quad \int_{a_2}^{b_2} F_2 dG_2 = \int_{a_2}^{b_2} F_2G_2' dx_2.\]
Combining these equalities the result is obtained.
\end{proof}

\begin{remark}
The continuously differentiability of $G$ can not be omitted from the hypotheses, because unlike happen in the complex analysis context, hyperbolic holomorphic functions does not have derivatives of all orders, see \cite{vd1935, mr1998, ks2005}.
\end{remark}

Although Theorem \ref{thRSR} establish a direct relation between Riemann-Stieltjes and Riemann integral, the integrability of $F$ respect to $G$ is required. So, it should be convenient to see under what conditions the integrability holds. 
\begin{theorem}
Suppose that $F: [\alpha, \beta]_{\k} \rightarrow \D$ is a continuous natural hyperbolic function and $G: \D \rightarrow \D$ is a natural hyperbolic function of bounded variation. Then $F$ is Riemann-Stieltjes integrable with respect to $G$.
\end{theorem}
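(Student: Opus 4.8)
The plan is to reduce everything to the idempotent components and invoke the classical real-variable theorem together with Theorem \ref{tmERS}. By Theorem \ref{tmERS}, $F$ is hyperbolic Riemann-Stieltjes integrable with respect to $G$ if and only if $F_1:[a_1,b_1]\to\R$ is real Riemann-Stieltjes integrable with respect to $G_1:\R\to\R$ and $F_2:[a_2,b_2]\to\R$ is real Riemann-Stieltjes integrable with respect to $G_2:\R\to\R$. So it suffices to verify the two real integrability statements.

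First I would translate the hypotheses on $F$ and $G$ into hypotheses on their components. Since $F$ is a continuous natural hyperbolic function, the characterization recalled at the end of Section \ref{sc1} gives that $F_1$ is continuous on $[a_1,b_1]$ and $F_2$ is continuous on $[a_2,b_2]$. For $G$, the only values of $G$ that enter the Riemann-Stieltjes sums $S_{\k}(\frakP,F,G)$ are those at points of $[\alpha,\beta]_{\k}$, so the bounded variation hypothesis is used through the restriction $G|_{[\alpha,\beta]_{\k}}$; by Corollary \ref{crBVC}, this restriction being of hyperbolic bounded variation is equivalent to $G_1$ being of real bounded variation on $[a_1,b_1]$ and $G_2$ being of real bounded variation on $[a_2,b_2]$.

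Next, I would apply the classical result (for instance \cite[Thm.~7.27]{a1974}): a real-valued function that is continuous on a compact interval is Riemann-Stieltjes integrable on that interval with respect to any real-valued integrator of bounded variation. Applying this with $(F_1,G_1)$ on $[a_1,b_1]$ and with $(F_2,G_2)$ on $[a_2,b_2]$ yields that both component integrals exist. Feeding this back through Theorem \ref{tmERS} (equivalently, through the decomposition \eqref{eq-rsk}) gives the existence of $\int_{\alpha}^{\beta} F\, d_{\k}G$, which is the assertion.

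I do not expect a serious obstacle here: once the reduction to components is in place, the statement is an immediate corollary of the real theory. The only point requiring a little care is the mild domain mismatch — $G$ is given on all of $\D$ whereas bounded variation in Definition \ref{d4} is a notion attached to a hyperbolic interval — which is resolved simply by observing that the Riemann-Stieltjes sums depend on $G$ only through its restriction to $[\alpha,\beta]_{\k}$, so it is this restriction (and hence $G_1$ on $[a_1,b_1]$ and $G_2$ on $[a_2,b_2]$) whose bounded variation is relevant.
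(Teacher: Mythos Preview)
Your proposal is correct and follows essentially the same route as the paper: reduce to components via Section~\ref{sc1} and Corollary~\ref{crBVC}, invoke \cite[Thm.~7.27]{a1974} for each real pair $(F_j,G_j)$, and conclude with Theorem~\ref{tmERS}. The paper's proof is terser and does not flag the domain-restriction point you raise, but the argument is the same.
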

\begin{proof}
Since components $F_1$ and $F_2$ of a continuous natural hyperbolic function $F$ are real continuous functions (see Sec. \ref{sc1}), Corollary \ref{crBVC} shows that $G_1$ and $G_2$ are real functions of bounded variation. Therefore, the integrals 
\[ \int_{a_1}^{b_1}F_1 dG_1 \quad and \quad \int_{a_2}^{b_2}F_2 dG_2\]
exist, which is clear from \cite[Thm. 7.27]{a1974}. Finally, by Theorem \ref{tmERS}, the result is obtained.
\end{proof}

\section*{Declarations}
\subsection*{Funding} Instituto Polit\'ecnico Nacional (grant number SIP20211188) and Postgraduate Study Fellowship of the Consejo Nacional de Ciencia y Tecnolog\'ia (CONACYT) (grant number 774598).
\subsection*{Conflict of interest} No conflict of interest regarding the work is reported in this paper.
\subsection*{Author contributions} G. Y. T\'ellez-Sanchez: Visualization, Investigation, Writing- Original draft preparation, J. Bory-Reyes: Conceptualization, Supervision, Writing- Reviewing and Editing. Both authors approved the final form of the manuscript.
\subsection*{Availability of data and material} No data were used to support this study.
\subsection*{Code availability} Not applicable.

\end{document}